\tikzstyle{arrow} = [thick,->,>=stealth]
\newtheorem{MYproposition}{Proposition}
\newtheorem{COR}{Corollary}
\newtheorem{lemma}{Lemma}
\newtheorem{thm}{Theorem}
\theoremstyle{definition}
\newtheorem{MYdef}{Definition}
\newcommand{\Nil}{\operatorname{Nil}}
\newcommand{\DM}{\mathcal D}
\newcommand{\CM}{\mathcal C}
\newcommand{\VM}{\mathcal V}
\newcommand{\End}{\operatorname{End}}
\newcommand{\CMok}{\operatorname{Coker}}
\newcommand{\Ker}{\operatorname{Ker}}
\newcommand{\Hom}{\operatorname{Hom}}
    \newcommand{\HOM}{\operatorname{HOM}}
\newcommand{\Tensor}{\operatorname{Tensor}}
\newcommand{\Mor}{\operatorname{Mor}}
\newcommand{\lra}{\longrightarrow}
\begin{document}
\title{Nilpotent Category of Abelian Category and Self-Adjoint Functors}
\author[Z.W. Bai]{Z.W. Bai}
\author[X. Cao]{X. Cao}

\author[S.T. Mao]{S.T. Mao}

\author[H. Zhang]{H. Zhang}

\author[Y.H. Zhang$^{**}$]{Y.H. Zhang$^{**}$}
\date{}

\address{School of Mathematical Sciences, Shanghai Jiao Tong University,
800 Dongchuan Road, 200240 Shanghai, China}

\email{\hfill{\space}\linebreak Zhiwei\ Bai,\ bai299@sjtu.edu.cn \\
\linebreak Xiang\ Cao,
\ spawner@sjtu.edu.cn
\\ \linebreak Songtao\ Mao,\  jarlly678@sjtu.edu.cn\\ \linebreak Han\ Zhang, \ hanzhang@sjtu.edu.cn \\  \linebreak Yuehui\ Zhang,\ zyh@sjtu.edu.cn}
\thanks{{\it 2010 Mathematical Subject Classification. \ 16E65, 16G10, 16G50.}}
\thanks{*\ Supported by the NSF of China ( 11671258 and 11771280), and NSF of Shanghai( 17ZR1415400).\\
\indent **\ Corresponding author.}

\begin{abstract}
Let $\mathcal{C}$ be an additive category.   The nilpotent category  $\Nil (\mathcal{C})$ of $\mathcal{C}$, consists of  objects  pairs $(X, x)$ with $X\in\mathcal{C}, x\in\End_{\mathcal{C}}(X)$ such that $x^n=0$ for some positive integer $n$, and a morphism $f:(X, x)\rightarrow (Y,y)$ is $f\in \Hom_{\mathcal{C}}(X, Y)$ satisfying $fx=yf$.  A general theory of $\Nil(\CM)$ is established and it is abelian in the case that $\mathcal{C}$ is abelian. Two abelian categories are  equivalent if and only if their nilpotent categories are equivalent, which generalizes a result in \cite{SWZ}. As an application, it is proved all self-adjoint functors are naturally isomorphic to $\Hom$ and $\Tensor$ functors over the category $\VM$ of finite-dimensional vector spaces. Both $\Hom$ and $\Tensor$ can be naturally generalized to $\HOM$ and $\Tensor$ functor over $\Nil(\VM)$. They are  still self-adjoint, but  intrinsically different.

\noindent {\bf Key words.}  additive category,  abelian category, nilpotent category, self-adjoint functor
\end{abstract}
\maketitle
\pagestyle{plain}
\section {\bf Introduction}

Nilpotent operators are one of the most important objects in mathematics. For instance,  the Jordan canonical form of a linear operator over a finite dimensional space can be reduced to that of a nilpotent operator plus a semisimple one, or more generally, the famous Jordan-Chevalley theorem decomposes any linear operator into a semisimple one plus a nilpotent one and these two are commuting with each other. In the past decade, C.M.Ringel and M.Schmidmeier \cite{RS}, D. Simson \cite{S1}, and B.L.Xiong, P. Zhang and Y.H.Zhang \cite{XZZ} and many other mathematicians have developed a beautiful theory using nilpotent operators to the study of the representation theory of artinian algebras, especially finite-dimensional algebras such as $k[x]/(x^n)$, algebras of bounden quivers, various triangular matrix algebras, and so on.

To better understand nilpotent operators as a whole, the category consisting of nilpotent operators deserve a general and thorough study. To this end, consider any additive category $\CM$. Denote by $\Nil(\CM)$ respectively $\Mor(\CM)$ the nilpotent category respectively morphism category of $\CM$ (see section \ref{nilpotent} for the detailed definition). As easily seen that (or cf. \cite{SWZ}) both $\CM$ and the new category $\Nil(\CM)$ are full subcategories of $\Mor(\CM)$.

As Mac Lane's well-known slogan , ``adjoint functors arise everywhere"\cite{M} such as $\Hom$ and $\Tensor$ adjunction. Among all adjunctions,  self-adjunction deserves a deep insight.

 In this paper, a general theory of nilpotent category $\Nil(\CM)$ is established, and it is also abelian in the case that $\mathcal{C}$ is  abelian. Two abelian categories are equivalent if and only if their nilpotent categories are equivalent (Theorem \ref{equivalence}) , which generalizes a result Theorem 3.1 in \cite{SWZ}. As applications, it is proved all self-adjoint functors are naturally isomorphic to $\Hom$ and $\Tensor$ functors over the category $\VM$ of finite-dimensional vector spaces(Theorem \ref{allselfadjoint}). Both $\Hom$ and $\Tensor$ can be naturally generalized to $\HOM$ and $\Tensor$ functor over $\Nil(\VM)$( Definition 1 and Definition 2). They are  still self-adjoint(Theorem \ref{Tensor is self} and Theorem \ref{HOM is self}), but intrinsically different. In particular, they are not adjoint to each other.

Throughout, $1_X$ denote the identity map of an arbitrary object $X$ and we will frequently abbreviate $1=1_X$ if no confusion will occur.

\section{\bf Nilpotent category of an abelian category}\label{nilpotent}
\subsection{ Nilpotent category}
\quad\\
Let $\mathcal{C}$ be any category, the endomorphism category of $\CM$, denoted by $\End({\mathcal{C}})$,  is defined as follows (cf. \cite{SWZ}):

{\rm (1)} an object of $\End(\mathcal{C})$ is a pairs $(X, x)\in\mathcal{C}\times \End_\mathcal{C}(X)$;

{\rm (2)} a morphism $f: (X, x)\longrightarrow(Y, y)$ is a morphism $f\in \Hom_\mathcal{C}(X, Y)$ satisfying $fx=yf$. Intuitively, the morphism $f$ of $\End (\mathcal{C})$ satisfies the following commutative diagram:
\[\xymatrix {
X\ar[d]_{f}\ar[r]^{x} &X\ar[d]^{f}\\
Y\ar[r]_{y} &Y}
\]

If $\CM$ is an abelian category with finite coproduct $\coprod X_i:=\bigoplus X_i$, then $\End(\CM)$ is also abelian in the natural way (cf. \cite{SWZ} Theorem 2.5 for the details) and nilpotent morphisms arise naturally. For $(X, x)\in\End(\CM)$, if there is a positive integer $n=n(x)$ such that $x^n=0$, then $x$ is a nilpotent morphism. Denote by $\Nil(\CM)$ the full subcategory of $\End(\CM)$ consisting of all nilpotent morphisms. We call this category {\em the nilpotent category of} $\CM$.

Obviously $\Nil(\CM)$ is a full subcategory of $\End(\CM)$, so the isomorphism classes of an object $(X,x)$ consisting of those objects $(Y, y)$ with an isomorphism $f: X\lra Y$ such that $y=fxf^{-1}$; that is, the isomorphism classes are indeed conjugacy ones.

Both the category $\End(\CM)$ and $\Nil(\CM)$ are very large for any nontrivial category $\CM$ (see Theorem 2.11 of \cite{SWZ} for the details), so a routine localizing process usually is necessary for reducing the size and complexity. To apply this localization, the full subcategory $\Nil(\CM)$ is required to be a so-called {\em Serre subcategory} or {\em thick subcategory} \cite{S}, that is, a full subcategory closed under extensions. Fortunately, this is really the case for the nilpotent category of $\CM$ as stated in the following.

\begin{thm}\label{Nilabelian}
	Let $\CM$ be an abelian category. Then

$(1)$ $\Hom_{\End(\CM)}((X, 1), (Y,y))=0, \Hom_{\End(\CM)}((Y, y), (X,1))=0$ for all $X, Y\in\CM$ with $y$ nilpotent.

$(2)$ The embedding $X\mapsto (X,0)$ from $\CM$ to $\Nil(\CM)$ is fully faithful.

$(3)$ $\Nil(\CM)$ is a thick abelian subcategory of $\End(\CM)$
\end{thm}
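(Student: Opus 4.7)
\smallskip

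\noindent\textbf{Proof plan.} I would dispatch the three parts in order of increasing work, relying throughout on the fact (cited from \cite{SWZ}, Theorem 2.5) that $\End(\CM)$ is abelian with kernels, cokernels, and finite (co)products computed component-wise in $\CM$, endowed with the endomorphism induced from the ambient one.

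For part $(1)$, the strategy is to exploit the defining equation of a morphism in $\End(\CM)$. If $f\colon (X,1_X)\to (Y,y)$, then $f\cdot 1_X=y\cdot f$, i.e.\ $f=yf$; iterating gives $f=y^{n}f=0$ for the nilpotency index $n$ of $y$. The opposite direction is symmetric: a morphism $g\colon (Y,y)\to (X,1_X)$ satisfies $g=gy$, hence $g=gy^n=0$.

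Part $(2)$ is essentially by inspection. A morphism $(X,0)\to (Y,0)$ in $\End(\CM)$ must satisfy $f\cdot 0=0\cdot f$, which is automatic, so $\Hom_{\Nil(\CM)}((X,0),(Y,0))=\Hom_{\CM}(X,Y)$. This gives full faithfulness of $X\mapsto(X,0)$ immediately.

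Part $(3)$ is where the real work sits, and it has two ingredients. First, $\Nil(\CM)$ is closed under kernels and cokernels inside $\End(\CM)$: given $f\colon (X,x)\to (Y,y)$ between nilpotent objects, the relation $fx=yf$ forces $x$ to restrict to an endomorphism of $\ker f$ (via the universal property of the kernel in $\CM$), and this restriction is nilpotent of index at most $n(x)$; similarly $y$ descends to $\CMok f$ with nilpotency index bounded by $n(y)$. Hence $\Nil(\CM)$ inherits an abelian structure as a full subcategory. Second, to show closure under extensions, consider a short exact sequence
\[
0\lra (X,x)\stackrel{i}{\lra}(Z,z)\stackrel{p}{\lra}(Y,y)\lra 0
\]
in $\End(\CM)$ with $x^{n}=0$ and $y^{m}=0$. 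From $py^m = 0$ I would rewrite this as $p\cdot z^{m}=y^{m}\cdot p=0$, so by the universal property of $\ker p=\Ima i$ the morphism $z^{m}\colon Z\to Z$ factors as $z^{m}=i\circ g$ for a unique $g\colon Z\to X$. Composing once more and using $zi=ix$ yields $z^{m+n}=z^{n}ig=ix^{n}g=0$, so $(Z,z)\in\Nil(\CM)$.

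The main obstacle is the extension-closure step: the nilpotency index of $z$ is not simply $\max(n,m)$, and one must convert the set-theoretic intuition ``$z^{m}(Z)\subseteq X$'' into a clean factorization statement in an arbitrary abelian category. Once the universal property of $\ker p$ is invoked correctly, however, the conclusion $z^{m+n}=0$ follows in one line, and the rest of $(3)$ is formal.
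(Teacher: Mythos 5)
Your proposal is correct and follows essentially the same route as the paper: parts $(1)$ and $(2)$ by the same one-line computations (the paper leaves them to the reader), and part $(3)$ by restricting $x$ to $\Ker f$ and descending $y$ to $\CMok f$ via the universal properties, then factoring a power of the middle endomorphism through the subobject to get extension-closure. Your extension step is a slight streamlining of the paper's: you use only the kernel factorization $z^m=ig$ together with $z^n i=ix^n$ to get index $n+m$, whereas the paper factors $y^t$ through both the kernel and the cokernel and concludes $y^{2t}=p(gf)q=0$; both are valid, and the only point you leave implicit (closure of $\Nil(\CM)$ under finite biproducts, which the paper verifies explicitly) is immediate since $x_1\oplus x_2$ is nilpotent.
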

\begin{proof}
The first two statements are direct consequences of the nilpotentness of $y$, so the proofs are left to the reader. Below is a proof of (3). 

Let $f: (X,x)\lra(Y,y)\in\Nil(\CM)$. We proceed to construct the kernel $\Ker(f)$ as follows. Since $\CM$ is abelian, the kernel $g: N\lra X$ of $f: X\lra Y$ in $\CM$ is well-defined, so we have the following commutative diagram:

\[\xymatrix {
N\ar[r]^{g}&X\ar[d]_{x}\ar[r]^{f} &Y\ar[d]^{y}\\
N\ar[r]^{g}&X\ar[r]^{f} &Y
}\]

Now $fx=yf$ forces $fxg=yfg=0$, thus $xg$ factors through $g: N\lra X$, so there exists uniquely an $n: N\lra N$ making the following diagram commutative:

\[\xymatrix {
N\ar[d]^{n}\ar[r]^{g}&X\ar[d]_{x}\ar[r]^{f} &Y\ar[d]^{y}\\
N\ar[r]^{g}&X\ar[r]^{f} &Y
}\]
The above procedure is exactly the same as \cite{SWZ}. Since $gn=xg$, we have $gn^2=xgn=x^2g$, so $gn^{m(x)}=x^{m(x)}g=0$. Thus $n^{m(x)}=0$ since $g: N\lra X$ is the kernel. This
proves that $(N, n)\in\Nil(\CM)$.

To see $g: (N, n)\lra (X,x)$ is the kernel of $f: (X,x)\lra(Y,y)$ in $\Nil(\CM)$, let $h: (Z,z)\lra(X,x)$ satisfy $fh=0$, we need show that $h$ factors through $g$ in $\Nil(\CM)$.

Since $g$ is the kernel of $f$ in the abelian category $\CM$, $h$ factors through $g$ in $\CM$, so $h=gu$ for a unique $u\in{\Hom}_{\CM}(Z, N)$ and we have the following diagram:
$$\xymatrix@R=3mm{
   &  & Z\ar[dd]^<<<<{z}\ar[dr]^{h}\ar@{-->}[dl]_{u} &  & \\
 & N\ar[rr]^<<<<<{\ \ \ g}\ar[dd]_{n} &  & X\ar[r]^{f}\ar[dd]^{x} & Y\ar[dd]^{y} \\
   &  & Z\ar[dr]^{h} \ar@{-->}[dl]_{u} &  &  \\
 & N\ar[rr]^<<<<<{\ \ \ g}&  & X\ar[r]^{f} & Y}$$

$$\xymatrix{
&Y\ar[d]^{p}\\
Z\ar[r]_{f}\ar@{-->}[ur]^{g}&X
}$$
All parallelogram and squares but the left parallelogram are commutative by construction. Since $hz=xh, h=gu, xg=gn$, we have $guz=xgu=gnu$, forcing $uz=nu$ since $g$ is the kernel of $f$. Thus the above diagram commutes and $u$ is nilpotent. Therefore, $g: (N, n)\lra (X,x)$ is indeed the kernel of $f: (X, x)\lra(Y,y)$ in $\Nil(\CM)$.

Dually, the cokernel $(C, c)$ of $f$ exists where $C=\CMok(f)$ in $\CM$ and $c$ is the unique morphism making the following diagram commute:

\[\xymatrix {
X\ar[d]_{x}\ar[r]^{f} &Y\ar[d]^{y}\ar[r]^{\pi}&C\ar[d]^{c}\\
X\ar[r]^{f} &Y\ar[r]^{\pi}&C
}\]

Now we proceed to show the existence of finite coproduct (the product can be constructed dually) in $\Nil(\CM)$. By induction, it is enough to show that any two objects $(X_i, x_i), i=1,2$ have their coproduct. Since $X_1\oplus X_2$ with the natural embeddings $e_i: X_i\lra X_1\oplus X_2$ is the coproduct of $X_1$ and $X_2$ in $\CM$, we claim $(X_1\oplus X_2, x_1\oplus x_2)$ with the natural embeddings $e_i: (X_i, x_i)\lra (X_1\oplus X_2, x_1\oplus x_2)$ is the coproduct of $(X_1, x_1)$ and $(X_2,x_2)$ in $\Nil(\CM)$. To see this, first notice that $x_1\oplus x_2$ is trivially nilpotent. Now let $f_i: (X_i, x_i)\lra (W,w), i=1, 2$ be morphisms in $\Nil(\CM)$, then there exists uniquely a morphism $u: X_1\oplus X_2\lra W$ such that $f_i=ue_i, i=1, 2$, due to the universality of the coproduct. Hence we have the following diagram

$$\xymatrix@R=3mm{
   &  & W\ar[dd]^<<<<{w} &  & \\
 & X_1\ar[rr]^<<<<<{\ \ \ \ \ \beta}\ar[dd]_{x_1}\ar[ur]^{f_1} &  & X_1\oplus X_2\ar[dd]_{x_1\oplus x_2}\ar@{-->}[ul]^{u} & X_2\ar[dd]^{x_2}\ar[l]^{\ \ \ \ \ e_2}\ar[ull]_{\ \ \ f_2} \\
   &  & W &  &  \\
 & X_1\ar[rr]^<<<<<{\ \ \ \ \ \beta}\ar[ur]^{f_1}&  & X_1\oplus X_2\ar@{-->}[ul]^{u} &  X_2\ar[l]^{\ \ \ \ \ e_2}\ar[ull]_{\ \ \ f_2}}$$

We have to show that the above diagram is commutative. Notice that all parallelograms except the one containing $u$ are commutative by construction. To see the parallelogram containing $u$ is commutative, we need show that $wu=u(x_1\oplus x_2)$, which it equivalent to $wue_i=u(x_1\oplus x_2)e_i$ for $i=1, 2$, due again to the universality of the coproduct. Since $f_i=ue_i, f_ix_i=wf_i, e_ix_i=(x_1\oplus x_2)e_i$, we have
$$wue_i=wf_i=f_ix_i=ue_ix_i=u(x_1\oplus x_2)e_i, i=1, 2$$
as required.

The above shows that $\Nil(\CM)$ is a full abelian subcategory of $\End(\CM)$. To see it is also thick in $\End(\CM)$, let $(X, x), (Z, z)\in\Nil(\CM)$ and consider the following commutative diagram with exact rows in $\End(\CM)$:

\begin{equation}\label{thick}
\xymatrix {
0\ar[r]&X\ar[d]_{x}\ar[r]^{f} &Y\ar[d]^{y}\ar[r]^{g}&Z\ar[d]^{z}\ar[r]&0\\
0\ar[r]&X\ar[r]^{f} &Y\ar[r]^{g}&Z\ar[r]&0
}
\end{equation}
We have to show that $(Y, y)\in\Nil(\CM)$. Since $fx=yf$ we know that $y^mf=fx^m$ for all $m\ge0$; similarly, $gy=zg$ implies $gy^m=z^mg$ for all $m\ge0$. Now both $x$ and $z$ are nilpotent, so there is a $t>0$ such that $y^tf=0$, $gy^t=0$. Since the row is exact in the abelian category $\CM$, there are morphisms $p: Z\longrightarrow Y, q: Y\longrightarrow X$ such that $y^t=p g=fq$, due to the universal property of cokernel respectively kernel. Thus $y^{2t}=q gfp=0$, finishing the proof.
\end{proof}
\begin{COR}Let $\CM$ be a $K$-Hom-finite abelian category. Then
$\Nil(\CM)$ is a Serre subcategory of $\End(\CM)$.
\end{COR}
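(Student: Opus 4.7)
The plan is to build on Theorem~\ref{Nilabelian}(3), which has already shown that $\Nil(\CM)$ is a full abelian subcategory of $\End(\CM)$ closed under extensions, and to supplement it with the remaining Serre axioms, namely closure under subobjects and quotients; the $K$-Hom-finiteness hypothesis will be invoked only to place the statement within the intended finite-dimensional framework of \cite{S}.

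First I would observe that a monomorphism $\iota : (X',x') \hookrightarrow (X,x)$ in $\End(\CM)$ is, by unravelling the definition, a monomorphism $\iota : X' \hookrightarrow X$ in $\CM$ intertwining $x'$ and $x$ via $\iota x' = x\iota$. Iterating the intertwining identity yields $\iota (x')^n = x^n \iota$ for every $n \ge 1$, so if $x^n = 0$ then $\iota (x')^n = 0$, and cancelling the monic $\iota$ on the left forces $(x')^n = 0$. The dual argument handles quotients: for an epimorphism $\pi: (X,x) \twoheadrightarrow (X'',x'')$ with $\pi x = x'' \pi$, iteration gives $(x'')^n \pi = \pi x^n = 0$, and right-cancellation of the epic $\pi$ yields $(x'')^n = 0$.

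Combining these two closure checks with Theorem~\ref{Nilabelian}(3) delivers the full Serre axiom: for every short exact sequence $0\to (X',x') \to (Y,y) \to (X'',x'') \to 0$ in $\End(\CM)$, one has $(Y,y)\in\Nil(\CM)$ if and only if both $(X',x')$ and $(X'',x'')$ lie in $\Nil(\CM)$. The only (mild) obstacle is terminological rather than mathematical: the paper already identifies ``Serre'' with ``thick'' and with ``closed under extensions'', so the two additional checks above merely reconcile this convention with the stronger, more standard Serre axiom. The $K$-Hom-finite hypothesis itself is not essential for the closure properties per se but guarantees that each $\End_{\CM}(X)$ is a finite-dimensional $K$-algebra, which in turn supplies a uniform bound (by $\dim_K\End_{\CM}(X)$) on the nilpotence exponent of any $x\in\End_{\CM}(X)$ satisfying $(X,x)\in\Nil(\CM)$, placing the result in the setting of \cite{S}.
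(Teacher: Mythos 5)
Your argument is correct, and it is in fact more thorough than what the paper does: the paper states this corollary with no separate proof, because it has \emph{defined} ``Serre subcategory'' (equivalently ``thick subcategory'') to mean a full subcategory closed under extensions, so for the authors the corollary is immediate from Theorem~\ref{Nilabelian}(3). You instead verify the stronger, standard Serre axioms by adding closure under subobjects and quotients, and both of your cancellation arguments ($\iota (x')^n = x^n\iota = 0$ with $\iota$ monic, and $(x'')^n\pi = \pi x^n = 0$ with $\pi$ epic) are valid; the identification of monomorphisms in $\End(\CM)$ with intertwining monomorphisms in $\CM$ is justified by the kernel construction in the proof of Theorem~\ref{Nilabelian}. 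What your approach buys is a reconciliation of the paper's nonstandard terminology with the usual definition of a Serre subcategory (closed under subobjects, quotients, and extensions), which the paper never addresses; what it costs is nothing, since the extra checks are two lines. Your closing remark that $K$-Hom-finiteness is inessential to the closure properties is also accurate --- the hypothesis plays no visible role in either your argument or the paper's.
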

{\em In the remainder of this paper,  $K$ will be a fixed algebraically closed field and $\CM$ be a $K$-Hom-finite abelian category.}

\subsection{The abelian category $\Nil(\CM)$}

In this section, we will assume our abelian category $\CM$ to be Krull–Schmidt and the Grothendieck group $G:=G(\CM)$ of $\CM$ to be of finite rank with a basis consisting of a complete list of nonisomorphic simple objects of $\CM$. It is well known that every nontrivial object in a Krull-Schmidt category decomposes into a finite direct sum of indecomposable objects having local endomorphism rings. By the general theory of abelian categories, all concepts such as projective, injective, indecomposable and simple object are well defined in $\Nil(\CM)$. A remarkable property of $\Nil(\CM)$ inherited from $\End(\CM)$ is that there are neither nonzero projective nor nonzero injective objects no matter whatever $\CM$ does, as stated in the following

\begin{lemma}\label{noprojnoinj}
Let $\CM$ be a K-Hom-finite abelian category. Then there are neither nonzero projective nor nonzero injective objects in  $\Nil(\CM)$.
\end{lemma}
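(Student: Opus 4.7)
The plan is to produce, for every nonzero $(X,x) \in \Nil(\CM)$, a single non-split short exact sequence
$$0 \longrightarrow (X,x) \stackrel{\iota}{\longrightarrow} (Y,y) \stackrel{\pi}{\longrightarrow} (X,x) \longrightarrow 0$$
in $\Nil(\CM)$. Such a sequence rules out both projectivity (being a non-split epi onto $(X,x)$) and injectivity (being a non-split mono out of $(X,x)$) at once, so one uniform construction settles both halves of the lemma.

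The construction is to set $Y = X \oplus X$ with
$$y = \begin{pmatrix} x & \phi \\ 0 & x \end{pmatrix}$$
for some $\phi \in \End_{\CM}(X)$ to be chosen later. For any $\phi$, the maps $\iota(v) = (v,0)$ and $\pi(v_1,v_2) = v_2$ satisfy the required commutation with $x$ and $y$, producing a short exact sequence in $\End(\CM)$. An induction on $k$ yields
$$y^k = \begin{pmatrix} x^k & \sum_{i+j=k-1} x^i \phi x^j \\ 0 & x^k \end{pmatrix},$$
and when $x^n = 0$, every term in the sum for $k = 2n$ has a factor $x^i$ or $x^j$ with exponent at least $n$; hence $y^{2n} = 0$, so $(Y,y) \in \Nil(\CM)$.

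A direct matrix computation shows that a section of $\pi$ amounts to an element $\sigma_1 \in \End_{\CM}(X)$ with $\phi = \sigma_1 x - x \sigma_1$, and a retraction of $\iota$ amounts to an $r_2 \in \End_{\CM}(X)$ with $\phi = x r_2 - r_2 x$. Both conditions say that $\phi$ lies in the image of the $K$-linear operator
$$\operatorname{ad}(x) : \End_{\CM}(X) \to \End_{\CM}(X), \qquad \mu \mapsto x\mu - \mu x,$$
so it will suffice to exhibit a $\phi$ outside this image.

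The main substantive step I expect is to establish that $\operatorname{ad}(x)$ is itself nilpotent. Writing $\operatorname{ad}(x) = L_x - R_x$ for the commuting operators $L_x(\mu) = x\mu$ and $R_x(\mu) = \mu x$, both of which satisfy $L_x^n = R_x^n = 0$, the binomial expansion gives $\operatorname{ad}(x)^{2n-1} = 0$. Since $\CM$ is $K$-Hom-finite and $X \neq 0$, the space $\End_{\CM}(X)$ is a nonzero finite-dimensional $K$-vector space, on which no nilpotent operator can be surjective. Hence $\operatorname{coker}(\operatorname{ad}(x)) \neq 0$; any representative of a nonzero coset yields the desired $\phi$, and the resulting sequence is non-split, completing the argument. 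The virtue of routing everything through the nilpotency of $\operatorname{ad}(x)$ is that it avoids any character­istic-dependent tricks (e.g.\ trace arguments), which would fail over fields of small positive characteristic, and works in any $K$-Hom-finite abelian $\CM$.
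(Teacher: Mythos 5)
Your proof is correct, and it uses the same basic device as the paper --- the self-extension $X\oplus X$ with endomorphism $\bigl(\begin{smallmatrix} x & \phi \\ 0 & x\end{smallmatrix}\bigr)$ --- but the way you force non-splitting is genuinely different, and better. The paper fixes $\phi=1_X$, observes that a retraction of the inclusion would give $xz-zx=1$, and dismisses this as ``impossible over finite dimensional $k$-vector spaces.'' That dismissal is a trace argument in disguise and fails in positive characteristic: over $K=\overline{\mathbb{F}_p}$, taking $X=K[t]/(t^p)$ with $x=d/dt$ (nilpotent) and $z$ multiplication by $t$ gives exactly $xz-zx=1$, so $1_X$ can lie in the image of $\operatorname{ad}(x)$ and the paper's specific sequence can split. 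Your version sidesteps this: you prove $\operatorname{ad}(x)=L_x-R_x$ is nilpotent (the binomial expansion of two commuting nilpotents), hence not surjective on the nonzero finite-dimensional space $\End_{\CM}(X)$, and you choose $\phi$ outside its image rather than insisting on $\phi=1_X$. Since both a section of $\pi$ and a retraction of $\iota$ are equivalent to $\phi\in\operatorname{Im}(\operatorname{ad}(x))$, one sequence kills projectivity and injectivity simultaneously, whereas the paper argues injectivity and waves at duality for the projective half. The only points worth making explicit in a final write-up are that $\iota$ is monic and $\pi$ is epic in $\Nil(\CM)$ because kernels and cokernels there are computed as in $\CM$ (Theorem \ref{Nilabelian}), and that $\End_{\CM}(X)\neq 0$ because $1_X\neq 0$ for $X\neq 0$; both are immediate. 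In short: same construction, but your non-splitting criterion is characteristic-independent and repairs a genuine gap in the paper's argument.
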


\begin{proof}
It is obvious that (0, 0) is a projective and injective object in $\Nil(\CM)$. We now prove that there are no other injective objects. For any nontrivial object $(X, x)$ in  $\Nil(\CM)$, consider the following morphism:
$$f: (X, x)\longrightarrow (X\oplus X, y)$$
where $f=(1, 0)^T, y=\begin{pmatrix}x&1\\0&x\end{pmatrix}$. Since $x$ is nilpotent, so is $y$. Moreover, $yf=fy=(x, 0)^T$, so $f$ is a $\Nil(\CM)$-morphism and is clearly monic. If $(X, x)$ is an injective object in $\Nil(\CM)$, then $f$ has to be split and there is a morphism $g=\begin{pmatrix}1&z\end{pmatrix}:  (X\oplus X, y)\longrightarrow (X, x)$ such that $gf=1$ and $gy=xg$, forcing $xz-zx=1$, which is impossible over finite dimensional $k$-vector spaces. This shows that $f$ is not split. Therefore, any nontrivial object of $\Nil(\CM)$ is not injective. The dual argument proves that there is no nontrivial projective object in $\Nil(\CM)$.
\end{proof}

Let $0\ne S\in\CM$ and $n\ge2$. Let $S^n$ be the $n$-fold direct sum of $S$ and $J_n$ the nilpotent Jordan block of size $n$. From the proof of Theorem 2.11 of \cite{SWZ}, $(S^n, J_n)$ is also indecomposable nonsemisimple in $\Nil(\CM)$. For convenience, we call such endomorphism of $C^n$ in the form $J_n(\lambda)$ the Jordan-endomorphism of $C^n$, where $\lambda\in{\rm End}_{\CM}(C)$. Indeed, since the isomorphic classes are given by conjugate isomorphisms, that is, $(B, b)$ is isomorphic to $(C, c)$ if and only if $cf=fb$ for some isomorphism $f$ between $B$ and $C$, so all of the  indecomposable objects of $\CM$ are classified by $c=fbf^{-1}$. Thus, isomorphic classes of objects in $\CM$ are indeed given by Jordan blocks $(C^n, J_n(\lambda))$, where $C\in\CM, n$ is a positive integer, and $J_n(c)$ is the Jordan-endomorphism of $C^n$.

Now, we can uncover the relationship between $\Nil(\CM)$ and $\End(\CM)$ in the following

\begin{thm}\label{infinite} Let $\CM$ be a nontrivial K-Hom-finite Krull–Schmidt abelian category with finite rank Grothendieck group $G$. Then 
\begin{itemize}
\item[(1)]$\Nil(\CM)$  is nonsemisimple and of infinite representation type.
    \item[(2)]The full subcategories consisting of simple objects of $\mathcal{C}$ and $\Nil(C)$ are  isomorphic.
    \item[(3)]The Grothendieck groups of $\Nil(C)$ is isomorphic to $G$.
   \item[(4)]The Grothendieck group of $\End(\CM)$ is isomorphic to the direct sum $\coprod_{k\in K}G$.
\end{itemize}
\end{thm}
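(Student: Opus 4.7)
My plan is to establish the four parts in the order (2), (1), (3), (4), since each builds on its predecessors. For (2), the key observation is that if $(X,x)\in\Nil(\CM)$ is simple, then $\Ker(x)$ is a nonzero $x$-invariant subobject of $X$ (nonzero by nilpotence of $x$), hence a subobject $(\Ker(x),0)\hookrightarrow(X,x)$ in $\Nil(\CM)$; simplicity then forces $\Ker(x)=X$, i.e.\ $x=0$, after which $(X,0)$ is simple precisely when $X$ is simple in $\CM$. Together with the fully faithful embedding $X\mapsto(X,0)$ from Theorem \ref{Nilabelian}(2), this yields a bijection on objects and on Hom spaces between the full subcategories of simple objects. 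For (1), the Jordan-block objects $(S^n,J_n)$ with $S$ simple and $n\ge 2$ are non-simple indecomposables (their endomorphism algebra is the local $K$-algebra $K[J_n]$ of upper-triangular Toeplitz matrices, using $\End_{\CM}(S)=K$ by Schur over an algebraically closed field), witnessing nonsemisimpleness; letting $n$ vary produces infinitely many pairwise non-isomorphic indecomposables of distinct composition length, so $\Nil(\CM)$ has infinite representation type.

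For (3), I would construct mutually inverse maps between $G$ and $G(\Nil(\CM))$. The embedding $X\mapsto(X,0)$ induces $\iota_{*}\colon G\to G(\Nil(\CM))$. Conversely, the assignment $(X,x)\mapsto X$ carries any short exact sequence in $\Nil(\CM)$ to one in $\CM$, since the kernels and cokernels in $\Nil(\CM)$ are built on the underlying ones in $\CM$ (as established in the proof of Theorem \ref{Nilabelian}); this gives $\pi_{*}\colon G(\Nil(\CM))\to G$ with $\pi_{*}\iota_{*}=\mathrm{id}_{G}$. For the reverse composition, the filtration $0\subseteq\Ker(x)\subseteq\Ker(x^{2})\subseteq\cdots\subseteq\Ker(x^{n(x)})=X$ by $x$-invariant subobjects has successive quotients annihilated by $x$, so in $G(\Nil(\CM))$ one computes $[(X,x)]=\sum_{i}[(Y_{i},0)]=\iota_{*}[X]=\iota_{*}\pi_{*}[(X,x)]$, yielding the desired isomorphism.

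Part (4) is the principal obstacle. My strategy is a categorical Jordan--Chevalley decomposition: for $(X,x)\in\End(\CM)$ the algebra $\End_{\CM}(X)$ is finite-dimensional over $K$, so the generalized eigenspaces $X_{\lambda}:=\Ker((x-\lambda\cdot 1_{X})^{N})$ (for $N$ large) yield a finite direct sum decomposition $X=\bigoplus_{\lambda\in K}X_{\lambda}$ on which $(x-\lambda)|_{X_{\lambda}}$ is nilpotent. Hence $(X,x)\cong\bigoplus_{\lambda}(X_{\lambda},\lambda+n_{\lambda})$ with each $(X_{\lambda},n_{\lambda})\in\Nil(\CM)$. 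The assignment $(Y,y)\mapsto(Y,y+\lambda\cdot 1_{Y})$ embeds a copy $\Nil(\CM)_{\lambda}$ of $\Nil(\CM)$ into $\End(\CM)$, and a shifted version of Theorem \ref{Nilabelian}(1) shows that $\Hom_{\End(\CM)}$ between objects of $\Nil(\CM)_{\lambda}$ and $\Nil(\CM)_{\mu}$ vanishes for $\lambda\ne\mu$. Consequently $\End(\CM)$ decomposes as the coproduct of the pairwise orthogonal subcategories $\Nil(\CM)_{\lambda}$, and by (3) its Grothendieck group is $\coprod_{\lambda\in K}G(\Nil(\CM))\cong\coprod_{\lambda\in K}G$. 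The delicate point will be making the generalized eigenspace decomposition canonical and functorial in this abstract Krull--Schmidt setting and verifying the $\Hom$-orthogonality; both ultimately reduce to finite-dimensional linear algebra inside $\End_{\CM}(X)$ via the $K$-Hom-finiteness hypothesis.
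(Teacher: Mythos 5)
Your proposal is correct, and for parts (1)--(3) it follows essentially the paper's line (simples in $\Nil(\CM)$ are exactly the $(S,0)$ with $S$ simple, detected via the kernel of $x$; the Jordan objects $(S^n,J_n)$ witness nonsemisimpleness and infinite type), though you supply details the paper omits: the paper cites \cite{SWZ} for (1) and simply asserts that (3) ``will follow'' from (2), whereas your filtration $0\subseteq\Ker(x)\subseteq\Ker(x^2)\subseteq\cdots\subseteq X$ with successive quotients killed by $x$ is precisely the missing step showing the classes $[(S,0)]$ generate $G(\Nil(\CM))$. For (4) your route is genuinely different. The paper argues object-by-object at the level of simples: it shows $(B,b)$ is non-simple whenever $B$ is, uses locality of the endomorphism ring to write an automorphism of an indecomposable as $\lambda 1+\eta$ with $\eta$ nilpotent, concludes that the simples of $\End(\CM)$ are the $(S,k)$, $k\in K$, and invokes $\Hom$-orthogonality between $(S,p)$ and $(S,q)$ for $p\ne q$. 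Your argument instead decomposes the whole category: the generalized eigenspace splitting $X=\bigoplus_\lambda X_\lambda$ (obtained from the orthogonal idempotents of $K[x]\subseteq\End_\CM(X)$, which split since $\CM$ is abelian) exhibits $\End(\CM)$ as an orthogonal coproduct of shifted copies $\Nil(\CM)_\lambda$ of $\Nil(\CM)$, and (4) reduces to (3). The orthogonality check is as you say: if $f(\lambda+n)=(\mu+m)f$ then $(\lambda-\mu)f=mf-fn$, and $L_m-R_n$ is nilpotent, so $f=0$ when $\lambda\ne\mu$. Your version buys something concrete: the paper's passage from ``the simples are the $(S,k)$'' to ``$G(\End(\CM))\cong\coprod_{k\in K}G$'' tacitly requires every object of $\End(\CM)$ to have a finite composition series with these simples as factors, and your block decomposition combined with the kernel filtration from (3) is exactly what certifies this. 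The cost is that you must verify the eigenspace decomposition and the block splitting of short exact sequences in the abstract setting, but as you note this reduces to finite-dimensional linear algebra in $\End_\CM(X)$ via $K$-Hom-finiteness, so there is no gap.
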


\begin{proof}(1) follows the proof of Theorem 2.11 of \cite{SWZ}. 

First we prove (2) and (3) will follow. To see the Grothendieck groups of $\CM$ and $\Nil(C)$ are isomorphic, it is enough to prove that $(X, x)\in\Nil(C)$ is simple if and only if $X\in\CM$ is simple. The sufficiency is clear. So suppose $(X, x)\in\Nil(C)$ is simple and $X\in\CM$ is not simple. Then there is a monomorphism $e: Y\longrightarrow X$ in $\CM$ which is not an isomorphism. If $x=0$, then $e$ induces a monomorphism $e: (Y, 0)\longrightarrow (X, 0)$, so $(X, 0)$ is not simple. Suppose the nilpotent morphism $x\not=0$, then  take the nontrivial $Ker(x)$ to be $Y$ and there is a monic morphism $e: Ker(x)\hookrightarrow X$ in $\CM$.  then $e$ again induces a nonisomorphic monomorphism $e: (Y, 0)\longrightarrow (X, 0)$, so $(X, 0)$ is not simple. 

Now we give a proof of (4). Suppose $0\not=B\in\CM$ is not simple, we claim that any object $(B, b)\in\End(\CM)$ is not simple. 

If $b$ is not an automorphism, then there is a monic morphism $f: Ker(b)\hookrightarrow B$ in $\CM$. Thus $f: (\Ker(b), 0)\longrightarrow (B, b)$ is monic in $\End(\CM)$ and $(B, b)$ is not simple in $\End(\CM)$. 

If $b$ is an automorphism, we may assume that $(B, b)$ is indecomposable (otherwise, it is clearly not simple), then its endomorphism ring is local (cf. Proposition 7.4 of \cite{L}, Page 441) and every automorphism is of the form $\lambda 1_B +\eta$, where $\lambda\in K, 1_B$ is the identity of $B$ and $\eta$ is a nilpotent endomorphism of $B$. Now, let $e: S\longrightarrow B$ be any monomorphism starting from a simple object $S$, then $\eta(S)=0$ since $\eta$ is nilpotent. Thus the above $e$ induces a  monomorphism:  $e: (S, \lambda 1_S)\longrightarrow (B, b)$, proving the non-simpleness of $(B, b)$. 

Thus, the only simple objects in $\End(\CM)$ are of the form $(S, 0)$ or $(S, b)$ for simple object $S\in\CM$, where $b$ is an automorphism, which is indeed a scalar multiple, due to the simpleness of $S$. Since any nonzero scalar multiple is only conjugate to itself, we conclude that all the simple objects in $\End(\CM)$ are of the form $(S, k), k\in K$, where $S$ runs over all isomorphic classes of simple objects in $\CM$. Moreover, it is clear that $\Hom_{\End(\CM)}((B, p), (B, q))=0, \Hom_{\End(\CM)}((B, q), (B, p))=0, \forall p\not=q\in K$. Therefore, the Grothendieck group of $\End(\CM)$  is isomorphic to the direct sum of that of $\CM$ with index set $K$. This finishes the proof of (4).

\end{proof}

The natural mapping $\varphi: \Nil(\CM)\longrightarrow \CM$ given by $\varphi(X, x)=X$ and $\varphi(f: (X, x)\longrightarrow (Y, y))=f: X\longrightarrow Y$ defines a faithful dense functor. So we have the following

\begin{thm}\label{equivalence} Let $\CM, \mathcal{D}$ be two abelian categories. The following are equivalent:

$(1)$ $\CM$ is equivalent to $\mathcal{D}$.

$(2)$ $Nil(\CM)$ is equivalent to $\Nil(\mathcal{D})$.

$(3)$ $\End(\CM)$ is equivalent to $\End(\mathcal{D})$.
\end{thm}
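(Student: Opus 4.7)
The implications $(1)\Rightarrow(2)$ and $(1)\Rightarrow(3)$ are handled by the obvious functorial lift: given an additive equivalence $F\colon\CM\to\DM$ with quasi-inverse $F^{-1}$, define $\tilde F\colon\Nil(\CM)\to\Nil(\DM)$ on objects by $(X,x)\mapsto(F(X),F(x))$ and on morphisms by $f\mapsto F(f)$. Additivity of $F$ guarantees $F(x)^n=F(x^n)=F(0)=0$ whenever $x^n=0$, and $F(f)F(x)=F(y)F(f)$ follows from $fx=yf$, so $\tilde F$ is well-defined. Applying the same recipe to $F^{-1}$ produces a quasi-inverse, with the unit/counit isomorphisms lifting componentwise. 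The identical construction works for $\End$ in place of $\Nil$.

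For $(2)\Rightarrow(1)$, the strategy is to recognize $\CM$ intrinsically inside $\Nil(\CM)$ and to transport this recognition through any equivalence $G\colon\Nil(\CM)\to\Nil(\DM)$. By Theorem~\ref{Nilabelian}(2), the embedding $\iota_\CM\colon\CM\hookrightarrow\Nil(\CM)$, $X\mapsto(X,0)$, is fully faithful with essential image $\CM_0\subset\Nil(\CM)$ consisting of those $(X,x)$ with $x=0$. I characterize $\CM_0$ as the vanishing locus of the canonical natural endomorphism $\eta^\CM$ of $1_{\Nil(\CM)}$ given by $\eta^\CM_{(X,x)}=x$ (naturality is exactly the commuting-square condition $fx=yf$). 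Conjugating by $G$ produces a natural endomorphism $G_\ast\eta^\CM$ of $1_{\Nil(\DM)}$ with value $G(\eta^\CM_A)$ at $G(A)$, and faithfulness of $G$ restricts it to an equivalence $\CM_0\xrightarrow{\simeq}G(\CM_0)$, where $G(\CM_0)$ is the vanishing locus of $G_\ast\eta^\CM$. Once $G(\CM_0)=\DM_0$ is verified, the composition $\CM\simeq\CM_0\xrightarrow{G}\DM_0\simeq\DM$ is the desired equivalence. The argument for $(3)\Rightarrow(1)$ is structurally identical, using the embedding $\CM\hookrightarrow\End(\CM)$ and the analogous natural endomorphism on $\End(\CM)$.

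The main obstacle is the equality $G(\CM_0)=\DM_0$. A priori $G_\ast\eta^\CM$ need not equal $\eta^\DM$ — for instance $(\eta^\DM)^2$ is a different natural endomorphism of $1_{\Nil(\DM)}$ with strictly larger vanishing locus — so a rigidity argument is required. I plan to use the indecomposable classification recorded after Theorem~\ref{Nilabelian}: every indecomposable of $\Nil(\DM)$ is a Jordan block $(C^n,J_n(\lambda))$ whose local endomorphism ring is generated over the algebraically closed $K$ by the canonical nilpotent $J_n$, and by Theorem~\ref{infinite}(2) the simples are exactly the $(T,0)$. On each Jordan block the component of $G_\ast\eta^\CM$ must therefore be a polynomial in $J_n$; combining this with the naturality constraints coming from the morphisms that embed and project the Jordan blocks of varying sizes pins down that polynomial to have zero constant term and nonzero linear term, forcing the vanishing locus of $G_\ast\eta^\CM$ to coincide with $\DM_0$ and closing the proof.
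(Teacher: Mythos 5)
Your reduction of $(2)\Rightarrow(1)$ to the single claim $G(\CM_0)=\DM_0$ follows the same skeleton as the paper: the paper likewise restricts the equivalence to the full subcategory $(\CM,0)$ and asserts that its essential image is $(\DM,0)$, arguing via an isomorphism $\phi\colon(D,0)\to F(X,x)=(Y,y)$ that $y\phi=\phi\cdot 0=0$ and hence $y=0$. Where you diverge is in how you propose to certify this identification, and that is where your argument has a genuine gap. The rigidity step rests on the classification of indecomposables as Jordan blocks $(C^n,J_n(\lambda))$, but the theorem is stated for arbitrary abelian categories, and that classification is established in the paper only for $\Nil(\VM)$ (Proposition \ref{indecomposable1}); for a general abelian $\DM$ the indecomposables of $\Nil(\DM)$ need not have this form, and even when they do, $\End_{\Nil(\DM)}((C^n,J_n(\lambda)))$ is not generated over $K$ by the canonical nilpotent --- it contains all of $\End_{\DM}(C)$ --- so the component of $G_\ast\eta^\CM$ on an indecomposable need not be a polynomial in $J_n$. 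Likewise the center of $\Nil(\DM)$ is in general much larger than the subalgebra generated by $\eta^\DM$ (it contains the center of $\DM$), so ``must be a polynomial in $J_n$'' is itself unproved, and the decisive assertion that naturality forces zero constant term and nonzero linear term is stated rather than argued.

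A structural point: since $G$ is faithful, the vanishing locus of $G_\ast\eta^\CM$ is automatically $G(\CM_0)$, so the entire content of the step is the equality of that class with $\DM_0$, i.e.\ the containment $G(X,0)\cong(Y,0)$ for all $X$ together with essential surjectivity of $G|_{\CM_0}$ onto $\DM_0$ (the latter follows from the former applied to a quasi-inverse). Your plan supplies neither containment for general $\CM,\DM$, so $(2)\Rightarrow(1)$ is not established as written. The directions $(1)\Rightarrow(2),(3)$ are fine and agree with the paper; $(1)\Leftrightarrow(3)$ is simply quoted by the paper from \cite{SWZ}, whereas your proposed reproof by the same vanishing-locus method inherits the same gap.
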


\begin{proof}
   The equivalence between (1) and (3) is Theorem 3.1 in \cite{SWZ}. So it suffices to show that (2) is equivalent to (1). It is clear that (1) implies (2). To see the inverse, let $F: \Nil(\CM)\longrightarrow\Nil(\mathcal{D})$ be an equivalence. Then the restriction $F|_{(\CM, 0)}$ of $F$ on the full subcategory $(\CM, 0)=\{(X, 0): X\in\CM\}$ of $\Nil(\CM)$ gives an equivalence between $(\CM, 0)$ and its image $F(\CM, 0)$. Claim the image $F(\CM, 0)=(\mathcal{D}, 0)$: Since $F$ is an equivalence, every $(D, 0)\in \Nil(\mathcal{D})$ is isomorphic to some $F(X,x)=(Y, y)\in \Nil(\mathcal{D})$. Denote the isomorphism by $\phi$. Then $\phi$ gives an isomorphism between $D$ and $Y$, and $y\phi=\phi=0$, forcing $y=0$, thus $(Y, y)\cong (D, 0)$ is in the image of $F$. This further induces an equivalence between $\CM$ and $\mathcal{D}$, since the natural functor $(X, 0)\mapsto X, f\mapsto f$ is clearly an equivalence of $(\CM, 0)$ and $\CM$.
\end{proof}

{\it Remark.} One hopes naturally to transfer the whole setup of $\Nil(\CM)$ in the present paper on a triangulated category. Unfortunately, this does not work properly. $\Nil({\mathcal T})$ does not have a natural triangulated category structure for a  triangulated category  $\mathcal{T}$, the most important obstruction of this failure is the lack of uniqueness of some suitable morphism in the third axiom of a triangulated category, the detail can be found in Remark 2.12 and Example 2.13 of \cite{SWZ}.

\begin{COR}
\label{Moritaequivalence}Let $A, B$ be two finite-dimensional algebras over $k$ with $\CM, \mathcal{D}$ their finite-dimensional (left) module categories respectively. Then following are equivalent:

$(1)$ $A$ is Morita equivalent to $B$.

$(2)$ $\CM$ is equivalent to $\mathcal{D}$.

$(3)$ $Nil(\CM)$ is equivalent to $\Nil(\mathcal{D})$.

$(4)$ $\End(\CM)$ is equivalent to $\End(\mathcal{D})$.
\end{COR}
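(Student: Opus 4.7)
The plan is to derive this corollary as a direct packaging of the classical Morita theorem together with Theorem \ref{equivalence} above. The equivalence (1) $\Leftrightarrow$ (2) is precisely the statement of the classical Morita theorem: two finite-dimensional $k$-algebras are Morita equivalent if and only if their finite-dimensional (left) module categories are equivalent as $k$-linear abelian categories. No work is required here beyond invoking this well-known fact.

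For the remaining implications, I would simply apply Theorem \ref{equivalence} to the abelian categories $\CM = A\text{-mod}$ and $\mathcal{D} = B\text{-mod}$. Both are $K$-Hom-finite Krull--Schmidt abelian categories with Grothendieck groups of finite rank, so in particular they satisfy the hypotheses of Theorem \ref{equivalence}. That theorem immediately yields the cycle of equivalences $\CM \simeq \mathcal{D} \Leftrightarrow \Nil(\CM) \simeq \Nil(\mathcal{D}) \Leftrightarrow \End(\CM) \simeq \End(\mathcal{D})$, which closes up the mutual equivalence of (2), (3), and (4).

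There is no real obstacle in this corollary; it is essentially a repackaging statement. The only point worth flagging is that one should confirm the equivalences produced by Theorem \ref{equivalence} are $k$-linear when specialized to module categories, so that the implication (2) $\Rightarrow$ (1) can be fed back into classical Morita theory without gap. This is immediate from the construction in the proof of Theorem \ref{equivalence}: the equivalence $\CM \simeq \mathcal{D}$ arises by restricting the equivalence $\Nil(\CM) \simeq \Nil(\mathcal{D})$ to the full subcategory $(\CM,0) \cong \CM$, and that restriction inherits $k$-linearity from the ambient functor on nilpotent categories. Thus the whole corollary is obtained by two lines of reference: Morita's theorem on one end, Theorem \ref{equivalence} on the other.
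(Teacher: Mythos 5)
Your proposal is correct and follows essentially the same route as the paper: the paper cites Theorem 3.1 of \cite{SWZ} for the equivalence of (1), (2) and (4) and then invokes Theorem \ref{equivalence} to bring (3) into the cycle, whereas you invoke classical Morita theory for (1) $\Leftrightarrow$ (2) and Theorem \ref{equivalence} for (2) $\Leftrightarrow$ (3) $\Leftrightarrow$ (4) --- a negligible difference in bookkeeping. Your added remark on $k$-linearity of the restricted equivalence is a reasonable point of care that the paper does not bother to spell out.
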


\begin{proof} The equivalence of (1), (2) and (4) is Theorem 3.1 in \cite{SWZ}. Now, Theorem \ref{equivalence} guarantees the equivalence of (4) and the other three conditions.
\end{proof}

 \section{\bf{Application: Self-Adjoint Functors}}

In this section, we give one application of the theory developed in the previous sections. That is the self-adjoint functors over $\CM$ and $\Nil(\CM)$, where $\CM$ is the category of finite-dimensional vector spaces over the algebraically closed field $K$.

Let $\CM, \DM$ be two categories and $F:\CM\longrightarrow\DM$ ,  $G:\DM\longrightarrow\CM$ be two functors. Then $F$ (respectively $G$) is called to be a left (respectively right) adjoint of $G$ (respectively $F$) if there is a natural bijection:
\begin{equation}\label{adjoint}
    \Hom_\DM(F(X), Y) \longrightarrow \Hom_\CM(X, G(Y)), \ \ \forall X\in\CM, Y\in\DM
\end{equation}

If $\CM=\DM$ and $F=G$, the most interesting case of adjoint arises, namely self-adjoint, which means that a functor left (hence also right) adjoint to itself. 

We recall the definition of self-adjoint functors.

	A functor $F:\CM \longrightarrow \CM$ is called to be {\em self-adjoint} if there exists a natural bijection :
	\begin{equation*}\label{C adjoint}
	    \varphi_{X,Y}:\Hom_{\CM}(F(X),Y) \rightarrow \Hom_{\CM}(X, F(Y))
	\end{equation*}
	for any objects $X, Y\in \CM$. The word `natural' means that $\forall f:X\rightarrow X'$, the following  diagram commutes.
\[\xymatrix{
\Hom(F(X), Y)\ar[r]^{\varphi_{X,Y}}\ar[d]&\Hom(X,F(Y))\ar[d]\\
\Hom(F(X'), Y)\ar[r]^{\varphi_{X',Y}}&\Hom(X', F(Y))
}\]

In the sequel, $\VM$ will be the category of finite-dimensional vector spaces over $K$ in the remainder. 

\subsection{Self-adjoint Functors over $\VM$}

It is  well-known that $\Hom$ and $\Tensor$ functors are adjoint over many abelian categories. In particular, they are isomorphic over $\VM$. Furthermore, they are the unique self-adjoint functor over $\VM$  up to isomorphism , as proved in the following

\begin{thm}\label{allselfadjoint}
Let $F$ be a self-adjoint functor over $\VM$. Then it is naturally isomorphic to $\Hom$ functor: $\Hom(F(K),-)$ and $\Tensor$ functor: $-\otimes F(K) $.
\end{thm}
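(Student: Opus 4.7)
The plan is to extract $F$ from the self-adjoint bijection by specializing to $X=K$, exploiting that $K$ corepresents the identity functor on $\VM$ via the canonical natural isomorphism $\Hom_{\VM}(K,Z)\cong Z$ (evaluation at $1_K$). Write $V:=F(K)$.

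For the first form, I would specialize the natural bijection
$$\varphi_{X,Y}:\Hom_{\VM}(F(X),Y)\xrightarrow{\ \sim\ }\Hom_{\VM}(X,F(Y))$$
to $X=K$, and then compose with the canonical natural identification $\Hom_{\VM}(K,F(Y))\cong F(Y)$. This immediately produces a natural isomorphism
$$F(Y)\;\cong\;\Hom_{\VM}(V,Y)\;=\;\Hom_{\VM}(F(K),Y),$$
natural in $Y$ since each constituent is.

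For the tensor form, I would use that $V$ is finite-dimensional to invoke the canonical natural isomorphism $\Hom_{\VM}(V,Y)\cong V^{*}\otimes Y$, natural in $Y$; it then suffices to identify $V^{*}$ with $V$. Pleasantly, self-adjointness itself supplies such an identification: applying $\varphi$ at $X=K,\,Y=K$ gives
$$V^{*}\;=\;\Hom_{\VM}(F(K),K)\;\cong\;\Hom_{\VM}(K,F(K))\;\cong\;F(K)\;=\;V.$$
Substituting yields $F(Y)\cong V\otimes Y\cong Y\otimes V=Y\otimes F(K)$, natural in $Y$, which is the second claim.

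The computation is essentially formal. What requires attention at the outset is that $F$, being simultaneously a left and right adjoint, preserves both limits and colimits and is therefore exact and additive, and that the self-adjoint bijection is read as $K$-linear in both arguments (the standard convention for an enriched setting over the field $K$). The only real subtlety — and what I would flag as the main obstacle — is the passage $V^{*}\cong V$: in $\VM$ this is not canonical in general, so one should point out that the self-adjunction itself supplies a distinguished choice, and that even without it any fixed linear isomorphism would do, because naturality is asserted only in the variable $Y$ while $V=F(K)$ is fixed.
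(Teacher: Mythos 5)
Your proof is correct and follows essentially the same route as the paper: specialize the adjunction bijection at $X=K$ and compose with the canonical evaluation isomorphism $\Hom_{\VM}(K,F(Y))\cong F(Y)$ to obtain $F\cong\Hom(F(K),-)$. You in fact go slightly further than the paper's written proof, which stops at the $\Hom$ form and leaves the tensor form implicit; your observation that the self-adjunction at $X=Y=K$ supplies the identification $F(K)^{*}\cong F(K)$ (and that any fixed linear isomorphism would do, since naturality is only asserted in $Y$) cleanly completes that remaining step.
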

\begin{proof}We prove for $\forall f\in \Hom (A, A')$,
 there exists a natural isomorphism
$$\eta: \Hom(F(K),-) \longrightarrow F$$
 such that the following diagram commutes. (Denote $\delta=\Hom(F(K), f)$).
\[\xymatrix{
\Hom(F(K), A)\ar[r]^-{\eta_{A}}\ar[d]_{\delta}&F(A)\ar[d]^{F(f)}\\
\Hom(F(K), A')\ar[r]^-{\eta_{A'}}&F(A')
}\]
Since $F$ is self-adjoint, there is a natural bijection $\varphi: \Hom(F(K),-)\longrightarrow \Hom(K, F(-))$. Define $\psi: \Hom(K,F(-)) \longrightarrow F$ by $\psi_A(g)= g(1), \forall g\in \Hom(K, F(A)),$ where $1$ is the unit element of $K$. It is well-known $\psi$  is also a bijection.  
\[\xymatrix{
    \Hom(F(K), A)\ar[r]^-{\varphi_A}\ar[d]_{\delta}&\Hom(K, F(A))\ar[r]^-{\psi_A}\ar[d]_{\Hom(K, F(f))}&F(A)\ar[d]_{F(f)}\\
\Hom(F(K), A')\ar[r]^-{\varphi_{A'}}&\Hom(K, F(A'))\ar[r]^-{\psi_{A'}}&F(A')
}\]
Since $[F(f)g](1)=F(f)g(1)=F(f)\psi_A(g)$, then $\psi$ is natural, namely the right square above commutes. 
Hence the two small squares commute, forcing the big square commutative. Therefore, $\eta:= \psi\varphi$ is a desired natural isomorphism.
\end{proof}
Surprisingly, $\Tensor$ and $\Hom$ functors are completely different over $\Nil(\VM)$, which will be clarified in the next subsection.

\subsection{Self-adjoint Functors over $\Nil(\VM)$}\quad\\
To understand the self-adjoint functor over $\Nil(\VM)$, we fix some notations: $K^p$ is the usual $p$-dimensional vector space, and $J_p$ is the nilpotent transformation of $K^p$ whose matrix under the standard basis is the $p$-dimensional Jordan block. Define $\dim(X,x):=\dim(X)$.

\begin{MYproposition}\label{indecomposable1}
\label{Hom-dimension}
\label{mension}
\begin{itemize}
    \item[(1)]$(X,x)\in \Nil (\VM)$ is indecomposable if and only if $(X,x)\cong (K^p,J_p)$.
    \item[(2)]Let $p, q$ be two positive integers. Then
$$\dim \Hom((K^p, J_p), (K^q, J_q))= \min\{p, q\}.$$
Furthermore, $\forall (X,x)=\bigoplus_{i=1}^s (K^{p_i},J_{p_i})$, $(Y,y)=\bigoplus_{j=1}^t (K^{p_j},J_{q_j}) \in \Nil (\VM)$.
Then
$$\dim \Hom((X,x),(Y,y))=\sum_{1\le i\le s,1\le j\le t} \min(p_i,q_j).$$
\end{itemize}

\end{MYproposition}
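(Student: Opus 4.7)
The plan is to reduce everything to classical facts about finite-dimensional modules over $K[t]$ via the equivalence between $\Nil(\VM)$ and the category of finite-dimensional $K[t]$-modules on which $t$ acts nilpotently. Under this equivalence, a pair $(X,x)$ corresponds to the vector space $X$ equipped with the $t$-action $x$, morphisms in $\Nil(\VM)$ coincide with $K[t]$-module homomorphisms, and the Jordan block $(K^p, J_p)$ corresponds to the cyclic module $K[t]/(t^p)$ under the basis $\{1, t, \dots, t^{p-1}\}$.

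For part (1), I would invoke the structure theorem for finitely generated torsion modules over the PID $K[t]$, specialized to modules supported at $(t)$: every finite-dimensional nilpotent $K[t]$-module decomposes as a direct sum of cyclic summands $K[t]/(t^{p_i})$, unique up to permutation. Indecomposability of each $K[t]/(t^p)$ follows since $\End_{K[t]}(K[t]/(t^p)) \cong K[t]/(t^p)$ is a local ring. Consequently $(X,x)$ is indecomposable if and only if it is isomorphic to a single Jordan block $(K^p, J_p)$, proving (1). This is essentially the content already invoked via Theorem 2.11 of \cite{SWZ}.

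For part (2), the main computation is $\dim_K \Hom((K^p, J_p), (K^q, J_q)) = \min(p, q)$. My plan is to identify this Hom space with $\Hom_{K[t]}(K[t]/(t^p), K[t]/(t^q))$. A module homomorphism is determined by the image $g \in K[t]/(t^q)$ of $1$, subject to the single relation $t^p g = 0$ in $K[t]/(t^q)$. Hence $g$ must lie in the annihilator of $t^p$ inside $K[t]/(t^q)$, which equals the ideal $(t^{\max(q-p,\,0)})/(t^q)$, a $K$-vector space of dimension $q - \max(q-p, 0) = \min(p, q)$. A direct matrix alternative (solve $J_q A = A J_p$ for a $q \times p$ matrix $A$ and count the resulting band-diagonal parameters) yields the same count, but the module-theoretic route is cleanest.

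The extension to arbitrary decompositions $(X,x) = \bigoplus_i (K^{p_i}, J_{p_i})$ and $(Y,y) = \bigoplus_j (K^{q_j}, J_{q_j})$ follows immediately from the biadditivity of the bifunctor $\Hom$, giving $\Hom((X,x), (Y,y)) \cong \bigoplus_{i,j} \Hom((K^{p_i}, J_{p_i}), (K^{q_j}, J_{q_j}))$ and then summing dimensions. The main obstacle, if any, lies in the single-block dimension count in (2); but this is purely a bookkeeping exercise once the $K[t]$-module viewpoint is adopted, and no novel idea is required, since the proposition is essentially equivalent to the classical classification of nilpotent operators up to conjugacy.
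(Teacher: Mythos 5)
Your proposal is correct, but it takes a genuinely different route from the paper. The paper works entirely with matrices: it observes that $f\in\Hom((K^p,J_p),(K^q,J_q))$ is a solution of the Sylvester/Lyapunov equation $J_q f = f J_p$, vectorizes this to $(J_q^T\otimes I - I\otimes J_p)x=0$, and computes the rank of the Kronecker coefficient matrix to be $pq-\min\{p,q\}$, whence the dimension $\min\{p,q\}$; part (1) is not argued in the printed proof at all (it is delegated to the proof of Theorem 2.11 of \cite{SWZ} elsewhere in the paper), and the biadditivity step for general direct sums is left implicit. You instead transport everything across the equivalence between $\Nil(\VM)$ and finite-dimensional $K[t]$-modules with nilpotent $t$-action, so that $(K^p,J_p)$ becomes $K[t]/(t^p)$, part (1) follows from the structure theorem for torsion modules over a PID together with locality of $\End_{K[t]}(K[t]/(t^p))$, and the Hom count reduces to computing the annihilator of $t^p$ in $K[t]/(t^q)$, namely $(t^{\max(q-p,0)})/(t^q)$ of dimension $\min(p,q)$. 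Your computation is correct, and what it buys is a proof that avoids the rank calculation for the $pq\times pq$ banded matrix (which the paper asserts as ``straight-forward'' without detail), covers part (1) explicitly, and makes the reduction to single blocks a formal consequence of additivity of $\Hom$; what the paper's approach buys is that it stays entirely inside linear algebra over $K$ without invoking the module-theoretic dictionary, which may be preferable given that the ambient objects of the paper are pairs $(X,x)$ rather than modules.
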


\begin{proof}By definition, $f\in\Hom((K^p, J_p), (K^q, J_q))$ if and only if $f\in\Hom(K^p,K^q)$ satisfies 
	$$ J_{q} f = f J_p.$$
Under a certain basis, this is in fact a {\it Lyapunov matrix equation} of the form $MX-XN=0$ with unknown matrix $X$. It is well known that such a Lyapunov equation is equivalent to the following system of linear equations in unknown vector $x$
\begin{equation}\label{lyapunov}(M^T\otimes I - I\otimes N)x=0
\end{equation}\label{Coefficient Matrix}
As to our case, $M=J_q, N=J_p$, so  the coefficient matrix  in the equation (\ref{lyapunov}) is 
in the following partitioned form 
\[J_q^T\otimes I_p - I_q\otimes J_p=\begin{pmatrix}
-J_q&0&0&\cdots&0&0\\
I_q&-J_q&0&\cdots&0&0\\
0&I_q&-J_q&\cdots&0&0\\
\vdots&\vdots&\ddots&\ddots&\vdots&\vdots\\
0&\cdots&0&I_q&-J_q&0\\
0&\cdots&0&0&I_q&-J_q
\end{pmatrix}_{pq\times pq}.\]

It is straight-forward to calculate the rank of $J_q^T\otimes I_p - I_q\otimes J_p$
\begin{equation}\label{rank}r(J_q^T\otimes I - I\otimes J_p)=pq-\min\{p,q\}.
\end{equation}
So
\begin{equation*}
   \begin{aligned}
       \dim\Hom((K^p, J_p), (K^q, J_q))=&\; pq-(pq-\min\{p, q\})\\
       =&\min\{p, q\}.\\
    \end{aligned}
\end{equation*}
\end{proof}

The following lemma is easy to check. 
\begin{lemma}
\label{basis}
Given $\Hom((K^p, J_p), (K^q, J_q))$, there is a basis $\{f_i\}$ $(i=1,\cdots,\min\{p,q\})$ of $\Hom((K^p, J_p), (K^q, J_q))$ satisfying
$$J_q f_i=f_{i+1},\quad i=1,\cdots, \min\{p,q\}-1$$
\end{lemma}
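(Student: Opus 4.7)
The plan is to give an explicit parametrization of $\Hom((K^p,J_p),(K^q,J_q))$ and then exhibit the required basis as the orbit of a carefully chosen generator under the left action of $J_q$.

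First I would fix standard Jordan bases $e_1,\dots,e_p$ of $K^p$ and $\epsilon_1,\dots,\epsilon_q$ of $K^q$ with $J_pe_i=e_{i-1}$ and $J_q\epsilon_j=\epsilon_{j-1}$ (with $e_0=0$, $\epsilon_0=0$). A linear map $f\colon K^p\to K^q$ satisfies $J_qf=fJ_p$ iff it is determined by its top value $v:=f(e_p)\in K^q$ via $f(e_i)=J_q^{p-i}v$, subject to the sole constraint $J_q^pv=0$, i.e.\ $v\in\Ker(J_q^p)=\mathrm{span}\{\epsilon_1,\dots,\epsilon_m\}$ where $m=\min\{p,q\}$. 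This produces a linear bijection $\Phi\colon \Ker(J_q^p)\longrightarrow \Hom((K^p,J_p),(K^q,J_q))$, $v\mapsto f_v$, which re-proves the dimension count of Proposition~\ref{mension} and, more importantly, is manifestly equivariant: a direct check gives $J_qf_v=f_{J_qv}$.

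Next I would choose the generator $v_1:=\epsilon_m$, the unique (up to scalar) vector at the top of the Jordan chain of $J_q|_{\Ker(J_q^p)}$, and set $v_i:=J_q^{i-1}v_1=\epsilon_{m-i+1}$ for $i=1,\dots,m$. These form a basis of $\Ker(J_q^p)$, so $f_i:=\Phi(v_i)=f_{v_i}$ is a basis of the $\Hom$-space. Equivariance of $\Phi$ then yields
\[
J_q f_i \;=\; f_{J_qv_i} \;=\; f_{v_{i+1}} \;=\; f_{i+1}, \qquad i=1,\dots,m-1,
\]
which is exactly the required relation.

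I do not expect any genuine obstacle: once the parametrization $\Phi$ and its $J_q$-equivariance are in place, the lemma follows by transporting the obvious Jordan basis of $\Ker(J_q^p)$ through $\Phi$. The only care needed is bookkeeping of the Jordan conventions (super- versus subdiagonal form of $J_p,J_q$); the statement is symmetric enough that either convention yields the same conclusion after relabeling.
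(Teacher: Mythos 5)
Your proof is correct and complete. The paper itself offers no argument here --- it dismisses the lemma with ``The following lemma is easy to check'' --- so there is no proof to compare against; what you have written supplies the missing verification. Your route is the natural one: the parametrization $\Phi\colon \Ker(J_q^p)\to\Hom((K^p,J_p),(K^q,J_q))$, $v\mapsto f_v$ with $f_v(e_i)=J_q^{p-i}v$, is well defined and bijective (the relation $J_qf=fJ_p$ propagates $f(e_p)$ down the chain and imposes only $J_q^pv=0$), it recovers the dimension count $\dim\Ker(J_q^p)=\min\{p,q\}$ of Proposition~\ref{mension} without the Lyapunov-equation rank computation, and the equivariance $J_qf_v=f_{J_qv}$ immediately transports the Jordan chain $\epsilon_m, J_q\epsilon_m,\dots,J_q^{m-1}\epsilon_m$ of $\Ker(J_q^p)$ to the desired basis $\{f_i\}$ with $J_qf_i=f_{i+1}$. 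The only point worth making explicit in a write-up is the one you implicitly use: $J_qf_v$ is again a morphism $(K^p,J_p)\to(K^q,J_q)$, which holds because $J_q$ commutes with itself, so post-composition with $J_q$ preserves the intertwining relation. Your closing remark about conventions is also apt; with the paper's (unstated) convention the same argument goes through after relabeling.
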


Now we generalize $\Hom$ and $\mathrm{Tensor}$ to  $\Nil(\VM)$ as follows. 
\begin{MYdef}
Let $B\in \VM$ be fixed, $b\in \End_\VM(B)$ invertible.
        The $\Tensor$ functor $-\otimes(B, b) $: $\Nil(\VM)\longrightarrow\Nil(\VM)$ consists of 
        \begin{itemize}
            \item a function: $\mathrm{ob}(\Nil(\VM))\longrightarrow \mathrm{ob}(\Nil(\VM)),$
            $$(X,x)\longmapsto(X\otimes B, x\otimes b);$$

            \item for each $(X,x), (Y,y)\in \Nil(\VM)$, a function :
            $\Hom((X,x),(Y,y))\longrightarrow\Hom((X\otimes B, x\otimes b),(Y\otimes B, y\otimes b)),$
             $$f\longmapsto f\otimes 1_B.$$
        \end{itemize}

\end{MYdef}
It is easy to check that $\Tensor$ functor is well-defined.
     
{\it Remark.} 
     When the nilpotent morphism is taken to be 0, i.e.$(X,x)=(X,0)$, then
$$\Hom_{(\Nil\VM)}((Y,0),(X,0)\otimes  (B, b)) = \Hom_{(\Nil\VM)}((Y,0),(X\otimes B, 0\otimes b))=\Hom_{\VM}(Y, X\otimes B).$$  So it is a natural generalization of the Tensor functor over $\VM$.
\begin{thm}\label{Tensor is self}
The ${\Tensor}$ functor: $-\otimes (B, b)$ is self-adjoint.
\end{thm}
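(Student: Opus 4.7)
My plan is to exhibit the bijection as a composite of the standard tensor-hom adjunction of $\VM$ with an isomorphism of objects in $\Nil(\VM)$ that rests on the invertibility of $b$. First, the canonical adjunction $\Hom_K(X\otimes B, Y)\cong\Hom_K(X, \Hom_K(B,Y))$ sends $\phi$ to $\phi^{\sharp}$ with $\phi^{\sharp}(u)(v) = \phi(u\otimes v)$. The $\Nil(\VM)$-condition $\phi\circ(x\otimes b) = y\circ\phi$, after substituting $v\mapsto b^{-1}v$ (possible because $b$ is invertible), becomes $\phi^{\sharp}(xu) = y\circ\phi^{\sharp}(u)\circ b^{-1}$. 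Hence $\phi^{\sharp}$ is exactly a morphism in $\Nil(\VM)$ from $(X,x)$ to $(\Hom_K(B,Y), \tau)$, where $\tau(g)=ygb^{-1}$ is nilpotent because $\tau^n(g)=y^n g b^{-n}$ vanishes once $y^n=0$. Under the standard identification $\Hom_K(B,Y)\cong Y\otimes B^*$ the target becomes $(Y\otimes B^*, y\otimes (b^{-1})^T)$, giving a bijection
$$\Hom_{\Nil(\VM)}\bigl((X\otimes B,\, x\otimes b),\, (Y,y)\bigr)\;\cong\;\Hom_{\Nil(\VM)}\bigl((X,x),\, (Y\otimes B^*,\, y\otimes (b^{-1})^T)\bigr)$$
that is natural in both variables.

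The key step is to produce, for each $(Y,y)$, an isomorphism $\sigma_{(Y,y)}: (Y\otimes B^*,\, y\otimes (b^{-1})^T)\to (Y\otimes B,\, y\otimes b)$ in $\Nil(\VM)$. By Proposition \ref{indecomposable1} every object of $\Nil(\VM)$ is a direct sum of Jordan blocks $(K^p,J_p)$, so it suffices to match Jordan types. For any invertible $c\in\End_K(C)$ and any $(K^p,J_p)$, the identity $(J_p\otimes c)^k=J_p^k\otimes c^k$ together with the invertibility of $c^k$ yields $\Ker(J_p\otimes c)^k=(\Ker J_p^k)\otimes C$; the same holds with $c=1_C$. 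So $J_p\otimes c$ and $J_p\otimes 1_C$ have identical kernel dimensions in every power, hence identical Jordan types, i.e.\ $(K^p,J_p)\otimes (C,c)\cong (K^p,J_p)^{\oplus\dim C}$. Summing over the Jordan components of $Y$ and taking $(C,c)$ to be $(B,b)$ and $(B^*,(b^{-1})^T)$ in turn shows that both $(Y\otimes B, y\otimes b)$ and $(Y\otimes B^*, y\otimes (b^{-1})^T)$ are isomorphic to $(Y,y)^{\oplus\dim B}$, so $\sigma_{(Y,y)}$ exists.

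Finally, define $\varphi_{(X,x),(Y,y)}(\phi)$ as $\phi^{\sharp}$ transported via $\Hom_K(B,Y)\cong Y\otimes B^*$ and then post-composed with $\sigma_{(Y,y)}$; it is manifestly a bijection. Naturality in the first variable follows because for any $h:(X',x')\to(X,x)$ one has $(\phi\circ F(h))^{\sharp}=\phi^{\sharp}\circ h$, whence $\varphi(\phi\circ F(h))=\varphi(\phi)\circ h$, which is precisely the commuting diagram appearing in the paper's definition of self-adjoint (naturality in $X$ with $Y$ fixed).

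The main obstacle is the key step: the iso $\sigma_{(Y,y)}$ cannot be made natural in $Y$ in general, since that would force $(B,b)\cong(B^*,(b^{-1})^T)$ in $\End(\VM)$, equivalently $b$ conjugate to $b^{-1}$, which typically fails. This is also where the invertibility hypothesis on $b$ is essential: without it, the identity $\Ker(J_p\otimes c)^k=(\Ker J_p^k)\otimes C$ breaks, and the Jordan-type matching collapses.
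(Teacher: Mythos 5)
Your proof is correct (measured against the paper's definition of self-adjoint, which only demands naturality in the first variable), but it takes a genuinely different route. The paper's proof works entirely on the target side of the functor: it invokes the fact that for nilpotent $x$ and invertible $b$ the Kronecker product $x\otimes b$ is conjugate to $x\otimes 1_B$, identifies $(X\otimes B, x\otimes b)$ with $\bigoplus_d (X,x)$ where $d=\dim B$, and then takes the bijection to be the elementary transposition $\Hom(\bigoplus_d (X,x),(Y,y))\cong\Hom((X,x),\bigoplus_d(Y,y))$ sending a row $(g_i)$ to the column $(g_1,\dots,g_d)^T$. You instead run the honest tensor-hom adjunction $\Hom_K(X\otimes B,Y)\cong\Hom_K(X,\Hom_K(B,Y))$, check that the $\Nil(\VM)$-conditions match up (using $b^{-1}$), and then correct the target $(Y\otimes B^*, y\otimes (b^{-1})^T)$ back to $(Y\otimes B, y\otimes b)$ by an objectwise isomorphism $\sigma_{(Y,y)}$. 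Both arguments ultimately rest on the same Jordan-type fact --- tensoring a nilpotent with an invertible map multiplies each Jordan block's multiplicity by $\dim B$ without changing block sizes --- which the paper merely cites and you actually prove via the kernel computation $\Ker(J_p\otimes c)^k=(\Ker J_p^k)\otimes C$. What your route buys: the only unnatural choice ($\sigma_{(Y,y)}$) sits on the $Y$-side, so naturality in $X$ holds on the nose through the genuine adjunction; in the paper's version the identification $(X\otimes B,x\otimes b)\cong\bigoplus_d(X,x)$ is itself a choice made for each $X$, and the commutation of $F(f)=f\otimes 1_B$ with these choices is left implicit. What the paper's route buys is brevity and symmetry: once the identification is granted, the bijection is a one-line matrix transposition. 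Your closing remark that full two-sided naturality would force $(B,b)\cong(B^*,(b^{-1})^T)$ is a plausible heuristic but is not justified as stated; it is, however, a side comment and does not affect the proof.
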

\begin{proof}Denote $\dim B=d$.
Since $x$ is nilpotent,  $x\otimes b$ is  conjugate to $x\otimes 1_B$ due to the Jordan canonical form of a Kronecker product theory, so $(X\otimes B, x\otimes b)$ is indeed isomorphic to $\bigoplus_d (X,x)$.

We need to prove that for $\forall (X,x),(Y,y)$, there exists a bijection $\varphi$ such that  $\forall f: (X', x')\rightarrow (X, x)$, the following diagram commutes
\[\xymatrix{
\Hom(\bigoplus_d (X,x), (Y,y))\ar[r]^-{\varphi_{x,y}}\ar[d]&\Hom( (X,x), \bigoplus_d(Y,y))\ar[d]\\
\Hom( \bigoplus_d (X',x'), (Y,y))\ar[r]^-{\varphi_{x',y}}&\Hom( (X',x'),  \bigoplus_d(Y,y))
}\]
$\forall g=(g_i)\in \Hom(\bigoplus_d (X,x), (Y,y)), g_i\in \Hom((X,x), (Y,y)), 1\leq i\leq d$ . Define $\varphi_{x,y}(g)=(g_1, g_2, \cdots, g_d)^T$. Obviously $\varphi$ is a bijection, and $(g_1, g_2, \cdots, g_d)^T f=(g_1 f, g_2 f, \cdots, g_d f)^T$, implying the above commutative diagram.
\end{proof}

\begin{MYdef}\label{HOMdef}
   Fix an object $(A, a)\in\Nil(\VM)$. $\HOM$ functor ${\HOM}((A, a), -)$: $\Nil(\VM)\longrightarrow\Nil(\VM)$ consists of 
        \begin{itemize}
            \item a function: $\mathrm{ob}(\Nil(\VM))\longrightarrow \mathrm{ob}(\Nil(\VM))$
            $$(X,x)\longmapsto (\Hom((A, a), (X,x)),\theta_x),$$ where $\theta_x(f)=fx,\quad\forall f \in \Hom((A,a),(X,x))$.
            
            \item for each $(X,x), (Y,y)\in \Nil(\VM)$, a function:
            $$\Hom((X,x),(Y,y))\longrightarrow \Hom(\Hom((A,a),(X,x)),\Hom((A,a),(Y,y)))$$
             $$f\longmapsto \Hom((A,a),f).$$
             
        \end{itemize}
 
 \end{MYdef}
 
 It is easy to check $\HOM$ is well-defined.
 
 {\it Remark.} Note that
 \begin{equation*}
     {\HOM}((A, 0), -),(X,0))=\Hom_{(\Nil\VM)}((A, 0), (X,0),\theta_0)=\Hom_{\VM}(A,X).
 \end{equation*}
So, $\HOM((A, 0), -)$ over $\Nil(\VM)$ is a natural generalization of $\Hom(A, -)$ over $\VM$.

This is indeed the usual definition of $\Hom$ functor over $\VM$, so it is a natural generalization. But it is not isomorphisc to $\Tensor$ functor(see below), we call it `HOM' functor.

In order to prove the $\HOM$ functor is self-adjoint, we need the following lemmas.
\begin{lemma}\label{important}
For two indecomposable objects $(K^p,J_{p})$ and $(K^q,J_{q})$,
$$(\Hom((K^p,J_p),(K^q,J_q)),\theta_{J_q})\cong ( K^{\min\{p,q\}},J_{\min\{p,q\}})$$
\end{lemma}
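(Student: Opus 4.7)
The plan is to apply Lemma \ref{basis}, the dimension formula of Proposition \ref{Hom-dimension}, and the classification of indecomposables in Proposition \ref{indecomposable1}(1) to identify $(\Hom((K^p,J_p),(K^q,J_q)), \theta_{J_q})$ as a single Jordan block. Set $m := \min\{p,q\}$.

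First, by Lemma \ref{basis}, pick a basis $\{f_1,\ldots,f_m\}$ of $\Hom((K^p,J_p),(K^q,J_q))$ with $J_q f_i = f_{i+1}$ for $i=1,\ldots,m-1$. Unwinding Definition \ref{HOMdef}, this says $\theta_{J_q}$ sends $f_i \mapsto f_{i+1}$ for every $i<m$. It then suffices to check that $\theta_{J_q}(f_m)=J_q f_m=0$, because once this is in hand the operator is visibly a single nilpotent cyclic operator on an $m$-dimensional space, and Proposition \ref{indecomposable1}(1) immediately gives $(\Hom((K^p,J_p),(K^q,J_q)),\theta_{J_q})\cong (K^m,J_m)$.

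For the remaining step, I would look at the cyclic sequence $f_1, J_q f_1, J_q^2 f_1,\ldots$ inside the Hom-space. Because $J_q$ is nilpotent on $K^q$, some iterate $J_q^s f_1$ vanishes, and its nonzero initial segment is linearly independent by the standard cyclic-vector fact for nilpotent operators. That segment contains $f_1=J_q^0 f_1,\ f_2=J_q f_1,\ldots, f_m=J_q^{m-1}f_1$, which already span a subspace of dimension $m$. On the other hand, Proposition \ref{Hom-dimension} forces the ambient Hom-space to have dimension exactly $m$, so the chain cannot extend further; the next term $J_q f_m$ must therefore be the zero term of the nilpotent chain.

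The only real obstacle is exactly this last step, namely ruling out $J_q f_m$ being a nontrivial combination of the $f_i$. The dimension-plus-cyclicity argument above dispatches it without re-doing the matrix computation behind Lemma \ref{basis}; any alternative avoiding that lemma would have to redo the analogous Lyapunov-type calculation from the proof of Proposition \ref{Hom-dimension}, which is why I prefer to leverage Lemma \ref{basis} from the outset.
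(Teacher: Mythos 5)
Your proof is correct and follows essentially the same route as the paper's: both invoke Lemma \ref{basis} to realize $\theta_{J_q}$ as the shift $f_i\mapsto f_{i+1}$ on the $\min\{p,q\}$-dimensional Hom-space and then identify the pair with $(K^{\min\{p,q\}},J_{\min\{p,q\}})$. You are in fact slightly more careful than the paper, which silently drops the term $a_{\min\{p,q\}}J_qf_{\min\{p,q\}}$ in its computation; your cyclic-vector-plus-dimension argument explicitly justifies $J_qf_{\min\{p,q\}}=0$.
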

\begin{proof} 
We just need to prove that $J_{\min\{p,q\}}$ and $\theta_{J_q}$ are conjugated, that is, there is an isomorphism $f\in\Hom(\Hom((K^p,J_p),(K^q,J_q)), K^{\min\{p,q\}})$ such that $\theta_{J_q}=f^{-1}J_{\min\{p,q\}}f$, equivalently $$f\theta_{J_q}=J_{\min\{p,q\}}f$$

We suppose a basis $\{e_i\}$ $(i=1,2,\cdots, \min\{p, q\})$ of $K^{\min\{p,q\}}$ and by Lemma \ref{basis},  there is a basis $\{f_i\}$ $(i=1,2,\cdots, \min\{p,q\})$ of $\Hom((K^p, J_p), (K^q, J_q))$ satisfying the following condition
$$J_{q}f_i=f_{i+1},\quad i=1,\cdots, \min\{p,q\}-1$$
We directly construct bijective $f$ mapping $f_i$ to $e_i$ $(i=1,2,\cdots, \min\{p, q\})$.

For any $\phi \in  \Hom((K^p,J_p),(K^q,J_q))$, we show that $f\theta_{J_q}(\phi)=J_{\min\{p,q\}}f(\phi)$:

Suppose $\displaystyle\phi=\sum_{i=1}^{\min\{p,q\}}a_if_i$. Then by corollary \ref{basis}
$$
\begin{aligned}
f\theta_{J_q}(\phi)=&f\theta_{J_q}(\sum_{i=1}^{\min\{p,q\}}a_i f_i)=f(\sum_{i=1}^{\min\{p,q\}}a_i\theta_{J_q}f_i)= f(\sum_{i=1}^{\min\{p,q\}}a_i J_q f_i)\\=&f(\sum_{i=1}^{\min\{p,q\}-1}a_i f_{i+1})=\sum_{i=1}^{\min\{p,q\}-1}a_i e_{i+1}=J_{\min\{p,q\}}(\sum_{i=1}^{\min\{p,q\}}a_i e_i)\\=&J_{\min\{p,q\}}f(\sum_{i=1}^{\min\{p,q\}}a_i f_i)=J_{\min\{p,q\}}f(\phi).
\end{aligned}
$$
\end{proof}

\begin{thm} \label{HOM is self}
${\HOM}$ functor ${\HOM}((A,a), -)$ is self-adjoint.
\end{thm}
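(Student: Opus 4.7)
The strategy is to reduce to the indecomposable case by biadditivity and then construct the natural bijection explicitly, following the template of Theorem \ref{Tensor is self} but using the truncation-type description of $\HOM$ provided by Lemma \ref{important}.

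First I would exploit the biadditivity of $\Hom$ and $\HOM$: since $\Hom((A_1,a_1)\oplus(A_2,a_2),-) \cong \HOM((A_1,a_1),-) \oplus \HOM((A_2,a_2),-)$ (with $\theta_x$ acting summand-wise), additivity in all three variables reduces the problem to the indecomposable case $(A,a)=(K^n,J_n)$, $(X,x)=(K^p,J_p)$, $(Y,y)=(K^q,J_q)$. Applying Lemma \ref{important} to both $\HOM((K^n,J_n),(K^p,J_p)) \cong (K^{\min\{n,p\}}, J_{\min\{n,p\}})$ and the analogous identity for $(K^q,J_q)$, followed by Proposition \ref{Hom-dimension}, one obtains
\[\dim\Hom(\HOM((A,a),(X,x)),(Y,y)) = \min\{n,p,q\} = \dim\Hom((X,x),\HOM((A,a),(Y,y))).\]

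Next, Lemma \ref{basis} supplies each side with a canonical basis $\{f_i\}_{i=1}^{\min\{n,p,q\}}$ and $\{g_i\}_{i=1}^{\min\{n,p,q\}}$ obeying the $\theta$-recursion $J_\bullet f_i = f_{i+1}$. I would define $\varphi_{(X,x),(Y,y)}$ by matching these canonical bases $f_i \mapsto g_i$ on each indecomposable piece and extending by biadditivity. This gives a $K$-linear bijection with the correct dimensions on every pair of objects.

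The main obstacle is verifying naturality. Given $\alpha\colon (X',x')\to(X,x)$, one must check that $\varphi_{(X',x'),(Y,y)}\bigl(g\circ \HOM((A,a),\alpha)\bigr) = \varphi_{(X,x),(Y,y)}(g)\circ \alpha$ as elements of $\Hom((X',x'),\HOM((A,a),(Y,y)))$. Reducing $\alpha$ itself to the indecomposable case and expressing it in the canonical basis of $\Hom((K^{p'},J_{p'}),(K^p,J_p))$ from Lemma \ref{basis}, the compatibility becomes a statement about how composition of canonical basis morphisms interacts with the Jordan-block truncation implicit in $\HOM((A,a),-)$. Concretely, one tracks how both the precomposition by $\HOM((A,a),\alpha)$ on the LHS and the precomposition by $\alpha$ on the RHS act on the $\{f_i\}$ and $\{g_i\}$, and shows that under the $\theta$-recursion both actions produce matching index shifts. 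This step is the most delicate part of the argument: while the dimension count and basis descriptions make the matching natural, the bookkeeping required to ensure consistency across all combinations of Jordan block sizes $n,p,p',q$ is intricate and must be carried out with care.
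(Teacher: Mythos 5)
Your dimension count is the same first step the paper takes: reduce to indecomposables, apply Lemma \ref{important} twice and Proposition \ref{Hom-dimension} to get $\min\{n,p,q\}$ on both sides. But from there the proposal diverges, and the divergence is where the gap lies. You define $\varphi$ by choosing, on each pair of indecomposables, the canonical bases of Lemma \ref{basis} and declaring $f_i\mapsto g_i$. The bases in Lemma \ref{basis} are only determined up to the choice of $f_1$ (and more generally up to the action of the local endomorphism ring), so this prescription is not canonical: an inconsistent normalization across different blocks destroys naturality, e.g.\ rescaling the chosen basis on one summand but not another already breaks the naturality square for a morphism between those summands. You acknowledge that the naturality verification is ``the most delicate part'' and that the bookkeeping ``must be carried out with care,'' but you never carry it out --- and since naturality \emph{is} the content of self-adjointness (a dimension-preserving bijection of Hom-spaces always exists here), the proof is missing its essential step. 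There is also a secondary circularity worry: reducing the naturality check itself to indecomposables presupposes that your transformation commutes with the (non-canonical) direct sum decompositions, which is again a naturality statement.

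The paper sidesteps all of this by giving a single uniform formula: fix $\beta\in A$ and set $\varphi(g)(\rho)=g(\rho(\beta))(\beta)$ for $\rho\in\Hom((A,a),(X,x))$. Because this is defined by evaluation rather than by basis-matching, naturality in $X$ is a one-line computation ($\tilde g\,\HOM(f)(\mu)=\tilde g(f\mu)=h(f\mu(\beta))=\widetilde{gf}(\mu)$), and the work shifts to proving injectivity of $\varphi$ (done by constructing an explicit $\rho$ from a suitably chosen $\beta$), after which your shared dimension count upgrades injectivity to bijectivity. If you want to salvage your approach, you would need either to exhibit a coherent, naturality-compatible choice of the bases $\{f_i\}$ across all blocks and all four parameters $n,p,p',q$ and actually verify the index-shift claim, or --- more cleanly --- replace the basis-matching by a uniform evaluation formula of the paper's type.
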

\begin{proof}
It is enough to prove the theorem for indecomposable objects of $\Nil\VM.$ So let $(A,a), (X,x), (Y,y)$ be indecomposable objects. Denote $p=\dim X$, $q=\dim Y$, $r=\dim A$.

First,  by Lemma \ref{important}, 
$$\dim\Hom((\Hom((A,a),(X,x)),\theta_{x}) ,(Y,y))=\min\{\min\{
r,p\},q\}$$
and 
$$\dim\Hom((Y,y),(\Hom((A,a),(Y,y)),\theta_{y}))=\min\{p,\min\{r,q\}\}$$
Thus 
\begin{equation}\label{equal}
    \Hom((\Hom((A,a),(X,x)),\theta_{x}) ,(Y,y))\cong \Hom((Y,y),(\Hom((A,a),(Y,y)),\theta_{y})).
\end{equation}

Second, we proceed to prove $\HOM((A,a),-)$ is self-adjoint, i.e., there exists a natural bijection $\varphi
:\Hom(-,\HOM((A,a),-))\longrightarrow\Hom(\HOM((A,a),-), -)$.

We define $\varphi$ by two steps. First, fix $\beta\in A$, $\forall g\in\Hom((X,x),\Hom((A,a),(Y,y),\theta_y)),$ 
 define $h :X\rightarrow Y$ by
$h(\alpha) = g(\alpha)(\beta), \forall \alpha\in X$.  Since $ g(x(\alpha))=\theta_y(g(\alpha))=yg(\alpha)$, then $hx(\alpha)=g(x(\alpha
))(\beta)=y(g(\alpha)(\beta))=yh(\alpha)$, thus  $hx=yh$. So $h\in \Hom((X,x),(Y,y))$.

Now for $ \rho \in \Hom((A,a),(X,x)),$  we define $\varphi: \Hom((A,a),(X,x))\rightarrow Y$ by $\varphi(g)(\rho)=h\rho(\beta)$. Denote $\tilde{g}=\varphi(g)$. Since $\tilde{g}\theta_x(\rho)=\tilde{g}(x\rho)=hx\rho(\beta)$, $y\tilde{g}(\rho)=y(h\rho(\beta))=yh\rho(\beta)$ and  $hx=yh$, then $\tilde{g}\theta_x(\rho)=y\tilde{g}(\rho)$, thus $\tilde{g}\theta_x=y\tilde{g}$. So $\tilde{g}\in \Hom(\HOM((A,a),(X,x)),(Y,y))$.

We prove $\varphi$ is injective. Suppose $0\neq g\in \Hom((X,x), \Hom((A,a),(Y,y), \theta_y))$ satisfying  $\varphi(g)=0$.  Then
 \begin{equation}\label{sasa}
     0=\varphi(g)(\rho)=h \rho(\beta)=g(\rho(\beta))(\beta), \forall \rho\in \Hom((A,a),(X,x)).
 \end{equation}
 
 Since $g$ is non-zero, there exists $u\in X$ satisfying $g(u)\neq 0$. So we can choose $\beta
 $ not in the kernel of $g(u)$. To produce a desired contradiction to equation (\ref{sasa}), we construct $\rho\in\Hom((A,a),(X,x))$ by
 $$\rho(a^{i-1}(\beta))=x^{i-1}(u),\quad 1\le i\le s,$$
 where $s$ is the nilpotent index of $a$.
 
 By Lemma \ref{basis}, $\{\beta,a(\beta),\cdots,a^{s-1}(\beta)\} $ is a basis of $A$. $\forall v\in  A$, $v=\sum_{i=1}^s t_i a^{i-1}(\beta).$
Then
$$
\begin{aligned}
x\rho(v)=&x\rho(\sum_{i=1}^s t_ia^{i-1}(\beta))=x(\sum_{i=1}^s t_ix^{i-1}(\beta))=\sum_{i=1}^s t_i x^{i}(\beta)\\=& \rho(\sum_{i=1}^s t_i a^{i}(\beta))=\rho a(\sum_{i=1}^s t_i a^{i-1}(\beta))=\rho a(v).
\end{aligned}$$

Back to equation (\ref{sasa}), 
$g(\rho(\beta))(\beta)=g(u)(\beta)=0$, 
 contradicting to the choice of $\beta$. Thus $\varphi$ is injective. 

By equation(\ref{equal}), $\varphi$ is an isomorphism.

Finally,  $\forall f:(X',x')\rightarrow(X,x)$,
  $ \mu\in \Hom((A,a),(X',x'))$, we have   $\tilde{g}\HOM(f)(\mu)=\tilde{g}(f(\mu))=h(f\mu(\beta))=\widetilde{g f}(\mu)$. So $\tilde{g}\HOM(f)=\widetilde{g f}$, namely the following diagram commutes
 \[\xymatrix{
\Hom((X,x),\HOM((A,a),(Y,y)))\ar[d]\ar[r]^{\varphi_{x,y}}&\Hom(\HOM((A,a),(X,x)), (Y,y))\ar[d]\\
\Hom((X',x'),\HOM((A,a),(Y,y)))\ar[r]^-{\varphi_{x',y}}&\Hom(\HOM((A,a),(X',x')), (Y,y))
}\]

Therefore, $\varphi$ is a natural isomorphism, and $\HOM$ is self-adjoint.

This finishes the proof. 
\end{proof}

{\it Remark.} 
Although $\Hom$ and $\Tensor$ functors are the unique self-adjoint functors over $\VM$ due to Theorem \ref{allselfadjoint}, each $\Tensor$ functor $-\otimes(B,b)$ and $\HOM$ functor $\HOM((A,a),-)$, where $(A,a)=\bigoplus_{i=1}^s(K^{p_i}, J_{p_i})$, are intrinsically different over $\Nil(\VM)$ by Theorem \ref{Tensor is self} and Theorem \ref{HOM is self}, consequently not adjoint to each other since they are both self-adjoint: For arbitrary indecomposable object $(X,x)\in\Nil(\VM)$, $\dim(X,x)\otimes(B,b)=\dim X\cdot\dim B$, but  $\dim\HOM((A,a),(X,x))=\sum_{i=1}^s\min\{\dim X, p_i\}$. The former is the linear function of $\dim X$, but the latter is not.


\begin{thebibliography}{99}
\bibitem[AF]{AF}F. W. Anderson, Kent R. Fuller, Rings and categories of modules, GTM 13, Springer-Verlag, 1992 .
\bibitem[Ar]{Ar} D. M. Arnold, Abelian Groups and Representations of Finite Partially Ordered Sets,
Canadian Mathematical Society Books in Mathematics, Springer-Verlag, New York, 2000.
\bibitem[AS]{AS}M. Auslander and S. O. Smal\o, Almost split sequences in subcategories, J. Algebra
69 (1981), 426-454.
\bibitem[B]{B} G. Birkhoff, Subgroups of abelian groups, Proc. Lond. Math. Soc. (2) 38 (1934),
385-401.
\bibitem[L]{L} S. Lang, Algebra, Revised Third Edition, Springer-Verlag, 2002.
\bibitem[M]{M} S. M. Lane, Categories for the working mathematician, Springer Science \& Business Media, 2013.
\bibitem[RW]{R} F. Richman and E. A.Walker, Subgroups of $p^5$-bounded groups, in: Abelian Groups
and Modules, Trends in Mathematics, Birkhauser-Verlag, Basel (1999), 55-73.
\bibitem[RS]{RS} C. M. Ringel and M. Schmidmeier, Invariant subspaces of nilpotent operators I,
J. reine angew. Math. 614 (2008), 1-52.
\bibitem[S]{S} J. P. Serre, Groupes d'homotopie et classes de groupes abéliens, Annals of Mathematics (1953): 258-294.
\bibitem[S1]{S1} D. Simson, Linear Representations of Partially Ordered Sets and Vector Space Categories, Algebra, Logic and Applications 4, Gordon and Breach Science Publishers, Brooklyn, NY, 1992.
\bibitem[S2]{S2} D. Simson, Representation types of the category of subprojective representations of
a finite poset over $K[t]/(t^m)$ and a solution of a Birkhoff type problem, J. Algebra 311 (2007), 1-30.
\bibitem[S3]{S3} D. Simson and M.Wojewodzki, An algorithmic solution of a Birkhoff type problem, Fund. Inform. 83 (2008), 389-410.
\bibitem[SWZ]{SWZ} K.Y.Song, L.S.Wu, Y.H.Zhang, Endomorphism category of an abelian category,  Comm. Algebra 46 (2018), no. 7, 3062-3070.
\bibitem[XZZ]{XZZ} B. L. Xiong, P. Zhang, Y. H. Zhang, Auslander-Reiten translations in monomorphism categories, Forum Math. 26 (2014), 863-912.
\bibitem[Z]{Z} P. Zhang, Monomorphism categories, cotilting theory,  and Gorenstein-projective modules, J. Algebra 339 (2011), 181-202.

\end{thebibliography}
\end{document}